\newtheorem{theorem}{Theorem}[section]
\newtheorem{lemma}[theorem]{Lemma}
\theoremstyle{definition}
\newtheorem{example}[theorem]{Example}
\newtheorem{proposition}[theorem]{Proposition}
\newtheorem{remark}[theorem]{Remark}
\newtheorem{claim}[theorem]{Claim}
\numberwithin{equation}{section}
\begin{document}

\title{On the Transcendental Galois Extensions}

\author{Feng-Wen An}
\address{School of Mathematics and Statistics, Wuhan University, Wuhan,
Hubei 430072, People's Republic of China}
\email{fwan@amss.ac.cn}

\subjclass[2010]{Primary 11J81; Secondary 12F20, 14H25}

\keywords{Galois, quasi-galois, strong Galois, tame Galois, transcendental Galois extension}

\begin{abstract}
In this paper the transcendental Galois extensions of a field will be introduced as counterparts to algebraic Galois ones. There exist several types of transcendental Galois extensions of a given field, from the weakest one to the strongest one, such as Galois, tame Galois, strong Galois, and absolute Galois. The four Galois extensions are distinct from each other in general but
coincide with each other for cases of algebraic extensions. The transcendental Galois extensions arise from higher relative dimensional Galois covers of arithmetic schemes.
In the paper we will obtain several properties of Galois extensions in virtue of conjugation and quasi-galois and will draw a comparison between the Galois extensions. Strong Galois is more accessible. It will be proved that a purely transcendental extension is strong Galois.
\end{abstract}

\maketitle

\section*{Introduction}

In this paper we will introduce the transcendental Galois extensions of fields. Let $L$ be an arbitrary extension of a field $K$ (not necessarily algebraic). We will have four types of extensions of fields: \emph{Galois}; \emph{tame Galois}; \emph{strong Galois}; \emph{absolute Galois}. These Galois extensions,
from the weakest one to the strongest one, are distinct from each other for transcendental extensions in general.

Naturally, we have the first definition that $L$ is  Galois over $K$ if $K$ is the invariant subfield of the Galois group $Gal(L/K)$. In general, such a transcendental Galois extension $L/K$ is more complicated than an algebraic one; the Galois group $Gal(L/K)$ is always an infinite group, and $Gal(L/K)$ is not a profinite group. So many approaches to algebraic extensions, established upon finite dimensions,  will not be valid. In particular,
such a transcendental Galois extension $L/K$ will not behave like an algebraic one; many properties of algebraic Galois extensions will never hold.

It leads us to take the other types of Galois extensions. In deed, a transcendental absolute Galois extension behaves exactly like an algebraic one. At the same time, strong Galois is weaker than absolute Galois but stronger than Galois and than tame Galois. For the cases of algebraic extensions, the four Galois will coincide with each other.

The transcendental Galois extensions are encountered by us in the discussions on function fields of higher relative dimensional Galois covers of arithmetic schemes (for instances, see \cite{an1,an2}).  At least to the author's knowledge, we have not found a successful discussion on transcendental Galois extensions of fields.

As counterparts to \emph{conjugate} and \emph{normal} in algebraic extensions of fields, we will introduce notions of \emph{conjugations} and \emph{quasi-galois} for transcendental extensions of fields in \S 1, respectively. For algebraic extensions, it is a preliminary fact that
$$\emph{Galois} =\emph{a unique conjugation} + \emph{separably generated}.$$

Naturally, in \S 3 we will have several properties of Galois extensions in virtue of conjugation and quasi-galois and then we will draw a comparison between the Galois extensions. In deed, for transcendental extensions of fields, we have
$$\emph{strong Galois} =\emph{a unique strong conjugation} + \emph{separably generated};$$
$$\emph{absolute Galois} =\emph{a unique absolute conjugation} + \emph{separably generated}.$$
However,
$$\emph{tame Galois} \not=\emph{a unique conjugation} + \emph{separably generated}.$$

This also shows that transcendental Galois extensions of fields are much more complicated than algebraic ones.

At last we will prove that a purely transcendental extension is strong Galois. Then it will be seen that that a strong Galois extension can be regarded as a purely transcendental extension of an algebraic Galois extension of a field with respect to all possible linearly disjoint bases. In deed, strong Galois is more accessible than tame Galois.

Here, the approach to transcendental Galois extensions discussed in the paper is based on preliminary calculus of polynomials over fields. In \S 2 a  preparatory lemma is obtained from Weil's theory of specializations (see \cite{weil}).

As a conclusion, we give some notes on a transcendental Galois extension $L/K$. The main feature of such a Galois extension is that for an intermediate subfield $K\subseteq F\subseteq L$, in general, $L$ is not a Galois extension of $F$. In the terminology of the present paper, that is to say that a Galois extension is not necessarily  absolute Galois and that there exist many other types of Galois extensions such as strong Galois.
It is natural for one to take algebraic operations on transcendental Galois extensions  and to consider the Galois correspondences between the subfields and the subgroups.  However, even for strong Galois, the case is  complicated very much. For instance, consider the transcendental extension $\overline{\mathbb{Q}(s,t)}/\mathbb{Q}$ in two variables.

\section{Definitions for Transcendental Galois Extensions}

Let $L$ be an extension of a field $K$ (not
necessarily algebraic). Let $\overline{L}$ denote an fixed algebraic closure of $L$.

\subsection{Notations}

The \textbf{Galois group} of $L$ over $K$ is the group of all automorphisms $\sigma$ of $L$ such that $\sigma (x)=x$ holds for any $x\in K$.

By a \textbf{nice basis} $(\Delta, A)$ of $L$ over $K$, we understand that $\Delta$ is a transcendental basis of $L$ over $K$ and $A$ is a linearly basis of $L$ over $K(\Delta)$ such that $L=K(\Delta)[A]$ is algebraic over $K(\Delta)$; moreover, such a nice basis $(\Delta, A)$ is  a \textbf{linearly disjoint basis} of $L$ over $K$ if the fields $K(\Delta)$ and $K(A)$ are linearly disjoint over $K$. Put
\begin{itemize}
\item $\Sigma[L/K]\triangleq$ the set of nice bases of $L$ over $K$;

\item $\Sigma[L/K]_{ld}\triangleq$ the set of linearly disjoint basis of $L$ over $K$.
\end{itemize}

\subsection{Several types of conjugations}

Let $M$ be a field with $K\subseteq L\bigcap M$.

In general, there exist several types of conjugations  which behave like conjugates of a field in an algebraic extension.
\begin{itemize}
\item $M$
 is said to be a \textbf{conjugation} of $L$ over $K$ if there is some $(\Delta, A)\in \Sigma[L/K]$
 such that
 $M$ is a conjugate of $L$ over the subfield $K(\Delta)$.

 In such a case, $M$ is also said to be a \textbf{conjugation} of $L$ \textbf{with respect to the nice basis} $(\Delta, A)$.

\item $M$
 is said to be a \textbf{strong conjugation} of $L$ over $K$ if the two properties are satisfied:
     \begin{itemize}

     \item $\Sigma[L/K]_{ld}$ is a nonempty set;

     \item For each $(\Delta, A)\in \Sigma[L/K]_{ld}$,
 $M$ is a conjugate of $L$ over the subfield $K(\Delta)$.
     \end{itemize}
\end{itemize}

It is clear that a conjugation of $L$ over $K$ must be contained in $\overline{L}$ as a subfield.

\subsection{Several types of quasi-galois extensions}

Also there are several types of quasi-galois extensions that behave like normal algebraic extensions.
\begin{itemize}

\item  $L$ is said to be \textbf{quasi-galois} over $K$ if there is some $(\Delta, A)\in \Sigma[L/K]$ such that each irreducible
polynomial $f(X)$ over the subfield $K(\Delta)$ that has a root in $L$ factors completely in $L\left[ X\right] $ into linear factors.

In such a case, $L$ is also said to be \textbf{quasi-galois} over $K$ \textbf{with respect to} $(\Delta, A)$.

\item  $L$ is said to be \textbf{strong quasi-galois} over $K$ if the two conditions are satisfied:
     \begin{itemize}
     \item $\Sigma[L/K]_{ld}$ is a nonempty set;

     \item For any $(\Delta, A)\in \Sigma[L/K]_{ld}$, each irreducible
polynomial $f(X)$ over the subfield $K(\Delta)$ that has a root in $L$ factors completely in $L\left[ X\right] $ into linear factors.
     \end{itemize}

\item  $L$ is said to be \textbf{absolute quasi-galois} over $K$ if for any subfield $F$ with $K\subseteq
F\subseteq L$, each irreducible
polynomial $f(X)$ over $F$ that has a root in $L$ factors completely in $L\left[ X\right] $ into linear factors.
\end{itemize}

\begin{remark}
For an arbitrary extension of a field, we have
\begin{equation*}
\begin{array}{l}
\text{absolute quasi-galois (with linear disjoint bases)} \\

\implies  \text{strong quasi-galois} \\

 \implies  \text{quasi-galois}.
\end{array}
\end{equation*}
In general, the converse is not true. However, for algebraic extensions, the four Galois are the same.
\end{remark}

\subsection{Several types of Galois extensions}

A transcendental Galois extension is more complicated than an algebraic one.

In deed, we have several types of Galois extensions such as the following.
\begin{itemize}

\item $L$ is said to be \textbf{Galois} over $K$  if $K$ is the
invariant subfield of $Gal(L/K)$, i.e., if there is an identity $$K=\{x\in L :\sigma (x)=x
\text{ for any }\sigma \in Gal(L/K) \}.$$

\item  $L$ is said to be \textbf{tame Galois} over $K$  if there is some $(\Delta, A)\in \Sigma[L/K]$ such that $L$ is Galois over the subfield $K(\Delta)$.

\item  $L$ is said to be \textbf{strong Galois} over $K$  if the two conditions are satisfied:
     \begin{itemize}
     \item $\Sigma[L/K]_{ld}$ is a nonempty set;

     \item $L$ is an algebraic Galois extension of $K(\Delta)$ for any $(\Delta, A)\in\Sigma[L/K]_{ld}$.
     \end{itemize}

\item  $L$ is said to be \textbf{absolute Galois} over $K$  if $L$ is Galois over any subfield $F$ such that $K\subseteq F\subseteq L$.
\end{itemize}

\begin{remark}
For an arbitrary extension of a field, we have
\begin{equation*}
\begin{array}{l}
\text{absolute Galois (with linear disjoint bases)} \\

\implies  \text{strong Galois} \\

 \implies  \text{tame Galois} \\

  \implies \text{Galois}
\end{array}
\end{equation*}
by virtue of properties obtained in \S 3.
In general, the converse is not true. However, for algebraic extensions, the four Galois are the same.
\end{remark}

\begin{remark}
In general, there exist many other types of Galois extensions from Galois extensions to absolute Galois extensions. Here, Galois and tame Galois arise from infinite Galois covers of arithmetic schemes (\cite{an1,an2}).
\end{remark}

\subsection{Examples}

As a conclusion to this section, we take several examples.

\begin{example}
Let $t$ be a variable over $\mathbb{Q}$ and let $g(x)=x^{3}+x$ be a polynomial over $\mathbb{Q}$.  Then \begin{itemize}
\item $\mathbb{Q}(t)/\mathbb{Q}$ is Galois;

\item $\mathbb{Q}(t)/\mathbb{Q}$ is not absolute Galois
\end{itemize}
since $\mathbb{Q}(t)$ is not Galois over $\mathbb{Q}(g(t))$ (see \cite{m-b}).
\end{example}

\begin{example}
Let $s$ and $t$ be two variables over  $\mathbb{Q}$.  By \emph{Propositions 3.9-10} in \S 3 it is seen that
\begin{itemize}
\item $\mathbb{Q}(\sqrt{2},t,t^{\frac{1}{2}})/\mathbb{Q}$ is tame Galois but is not strong Galois;

\item $\mathbb{Q}(\sqrt{2},s,t)/\mathbb{Q}$ is strong Galois  but is not absolute Galois.
\end{itemize}
\end{example}

\begin{example}
Let $t$ be a variable over the complex number field $\mathbb{C}$. Take a subfield $\mathbb{C}\subsetneqq L\subsetneqq \mathbb{C}(t)$. There are the following statements:
\begin{itemize}
\item $\mathbb{C}(t)/\mathbb{C}$ is absolute Galois.

\item $\mathbb{C}(t)/L$ is absolute Galois.

\item Let $L/\mathbb{C}$ be Galois. Then $Gal(L/\mathbb{C})$ is an infinite group.

\item $L$ is not isomorphic to $\mathbb{C}(t)$ over $\mathbb{C}$. In particular, for any $n \geq 2$, the subfield $\mathbb{C}(t^{n})$  is not isomorphic to $\mathbb{C}(t)$ over $\mathbb{C}$
 from \emph{Proposition 3.6} in \S 3 according to the fact that $\mathbb{C}(t)$ has a unique conjugation over $\mathbb{C}$.
\end{itemize}
\end{example}

\section{Preliminaries}

Let $L$ be a finitely generated extension of a field $K$.
 Recall that a \textbf{$(r,n)$-nice basis} of $L$ over $K$ we understand a finite number of elements $w_{1},w_{2},\cdots ,w_{n}\in L $ satisfying the below conditions:
\begin{itemize}
\item $L=K(w_{1},w_{2},\cdots ,w_{n})$;

\item $w_{1},w_{2},\cdots ,w_{r}$ are a transcendental basis of $L$ over $K$;

\item $w_{r+1},w_{r+2},\cdots ,w_{n}$ are a linear basis of $L$ over $K(w_{1},w_{2},\cdots ,w_{r})$,
\end{itemize}
where $0\leq r\leq n$.

\begin{lemma}
Let $F$ be a subfield with $K\subseteq F
\subsetneqq L$. Suppose that  $x \in L$ is algebraic over $F$ and that $z\in \overline{L}$ is a
conjugate of $x$ over $F$.

Then there exists a $(s,m)$-nice basis
$v_{1},v_{2},\cdots ,v_{m}$ of $L$ over $F\left( x\right)$ and an $F$-isomorphism $\tau$ from the field
\begin{equation*}
F\left( x,v_{1},v_{2},\cdots ,v_{s},v_{s+1},\cdots, v_{m}\right)=L
\end{equation*}
onto a field of the form
\begin{equation*}
F\left( z,v_{1},v_{2},\cdots ,v_{s},w_{s+1},\cdots, w_{m}\right)=\tau(L)
\end{equation*}
such that
\begin{equation*}
\tau (x)=z,\tau (v_{1})=v_{1},\cdots,\tau (v_{s})=v_{s},
\end{equation*}
where $w_{s+1},w_{s+2},\cdots, w_{m}$ are elements contained in an extension of $F$.

Moreover, we have
\begin{equation*}
w_{s+1}=v_{s+1},w_{s+2}=v_{s+2},\cdots, w_{m}=v_{m}
\end{equation*}
if $z$ is not contained in $F(v_{1},v_{2},\cdots,v_{m})$.
\end{lemma}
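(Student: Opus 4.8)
The plan is to reduce everything to extending the $F$-isomorphism determined by the conjugacy of $x$ and $z$, and then to control the images of a conveniently chosen basis. Since $z$ is a conjugate of $x$ over $F$, there is an $F$-isomorphism $\sigma\colon F(x)\to F(z)$ with $\sigma(x)=z$. I would begin by fixing a transcendence basis $v_{1},\dots,v_{s}$ of $L$ over $F(x)$ and noting that these elements are automatically algebraically independent over $F(z)$ as well: both $F(x)$ and $F(z)$ are algebraic over $F$, so $v_{1},\dots,v_{s}$ are algebraically independent over $F$ and hence over the algebraic extension $F(z)$. Therefore $\sigma$ extends to an isomorphism $\tau_{0}\colon F(x)(v_{1},\dots,v_{s})\to F(z)(v_{1},\dots,v_{s})$ sending each free generator $v_{i}$ to itself.

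Writing $E_{1}=F(x)(v_{1},\dots,v_{s})$, the field $L$ is a finite algebraic extension of $E_{1}$ because $L$ is finitely generated over $K$; I would then pick a linear basis $v_{s+1},\dots,v_{m}$ of $L$ over $E_{1}$, so that $v_{1},\dots,v_{m}$ is an $(s,m)$-nice basis of $L$ over $F(x)$. Extending $\tau_{0}$ to an isomorphism of algebraic closures inside $\overline{L}$ and restricting it to $L$ produces an $F$-isomorphism $\tau$ with $\tau(x)=z$ and $\tau(v_{i})=v_{i}$ for $i\le s$; setting $w_{s+i}:=\tau(v_{s+i})$ gives $\tau(L)=F(z,v_{1},\dots,v_{s},w_{s+1},\dots,w_{m})$ with the $w_{s+i}$ lying in $\overline{L}$, an extension of $F$. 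This already delivers the main assertion, and up to here the argument is only the transcendence-basis bookkeeping together with the classical extension of isomorphisms to algebraic extensions.

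For the \emph{moreover} clause the aim is to force $\tau$ to fix all of $N:=F(v_{1},\dots,v_{m})$ rather than just the transcendental part, since $w_{s+i}=\tau(v_{s+i})=v_{s+i}$ for all $i$ is equivalent to $\tau|_{N}=\mathrm{id}$ together with $\tau(L)=N(z)$. Such a $\tau$ exists exactly when $z$ is a conjugate of $x$ over $N$, i.e. when $z$ is a root of the minimal polynomial of $x$ over $N$; phrased in Weil's language, this says that $(z,v_{1},\dots,v_{m})$ is a specialization of $(x,v_{1},\dots,v_{m})$ over $F$. The plan is to obtain this by extending the specialization $x\to z$ over $F$, via the specialization theory of \cite{weil}, while keeping every coordinate $v_{i}$ fixed, and to use the hypothesis $z\notin N$ to guarantee that $N$ does not separate $x$ from $z$ — that is, that $z$ does not fall into an irreducible factor of $\mathrm{minpoly}_{F}(x)$ over $N$ different from that of $x$.

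I expect this final step to be the main obstacle, and it is genuinely delicate, because the conclusion is sensitive to the particular nice basis: for a careless choice the induced specialization can be forced to move some $v_{s+i}$ even while $z\notin N$. The crux is therefore to choose the linear part $v_{s+1},\dots,v_{m}$ simultaneously with $\tau$ — for instance from within the fixed field of a prescribed extension $\widehat{\tau}$ of $\tau_{0}$ to $\overline{E_{0}}$, where $E_{0}=F(v_{1},\dots,v_{s})$ — so that $\widehat{\tau}$ fixes each chosen basis element, and then to verify that the hypothesis $z\notin N$ is precisely what makes such a choice available. Pinning down this compatibility between the nice basis and the specialization is where the real effort of the proof will lie.
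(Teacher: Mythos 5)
Your handling of the main assertion is correct and is essentially the paper's own route streamlined: the paper also starts from $\sigma_x\colon F(x)\to F(z)$, extends it across the transcendence basis $v_1,\dots,v_s$ (its Step 1), and then pushes through the algebraic part element by element, whereas you extend once to an automorphism of $\overline{L}$ and restrict — a cleaner but equivalent argument, and this part is routine. The genuine gap is the \emph{moreover} clause. You reformulate it correctly (one needs $x$ and $z$ to be conjugate over $N:=F(v_1,\dots,v_m)$, so that $\tau$ can fix $N$ pointwise), you observe correctly that this is sensitive to the choice of the linear part of the basis, and then you stop: ``pinning down this compatibility between the nice basis and the specialization is where the real effort of the proof will lie'' is a statement of intent, not a proof. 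Since the moreover clause is the only non-routine content of the lemma, and it is what the paper actually invokes later (in the proof of Lemma 3.1 and hence Proposition 3.2), the proposal as written does not prove the stated lemma.

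For comparison, the paper proves this step via its Claim 2.2, using Weil's specialization calculus one algebraic basis element at a time, and asserts it for an \emph{arbitrary} nice basis: whenever $z\notin F(v_1,\dots,v_{s+1})$, every polynomial relation over $F$ satisfied by $(x,v_1,\dots,v_{s+1})$ is claimed to be satisfied by $(z,v_1,\dots,v_{s+1})$. Your suspicion that the basis must be chosen carefully is in fact well founded and shows that Claim 2.2 cannot hold as stated: take $F=\mathbb{Q}$, $L=\mathbb{Q}(2^{1/4})$, $x=2^{1/4}$, $z=i\,2^{1/4}$, and the valid $(0,1)$-nice basis $v_1=\sqrt{2}$ of $L$ over $F(x)=L$; then $z\notin F(v_1)=\mathbb{Q}(\sqrt{2})$, yet $f(X,X_1)=X^2-X_1$ vanishes at $(x,v_1)$ and not at $(z,v_1)$, so no $F$-isomorphism can fix $v_1$ and send $x$ to $z$ (any such map sends $v_1=x^2$ to $z^2=-\sqrt{2}$). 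So the conditional conclusion genuinely requires selecting the basis — for instance arranging that $F(v_1,\dots,v_m)\cap\overline{F}$ does not split the minimal polynomial of $x$ over $F$, as the basis $v_1=1$ does in this example — and proving that such a selection is always possible is precisely the content you deferred. Until that choice is exhibited and verified, the moreover part remains unproved in your proposal (and, one should note, the paper's own Claim 2.2 glosses over exactly the difficulty you identified).
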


\begin{proof}
We will proceed in two steps according to the assumption that $s=0$
or $s\not= 0$.

\emph{Step 1}. Let $s\not= 0$.
That is, $v_{1}$ is a variable over $F\left( x\right) $.

Let $\sigma_{x}$ be the $F-$isomorphism between fields $F(x)$ and
$F(z)$ with $\sigma_{x}(x)=z$. From the isomorphism $\sigma_{x}$ we
obtain an isomorphism $\sigma _{1}$ of $F\left( x,v_{1}\right) $
onto $F\left( z,v_{1}\right) $ defined by
$$
\sigma_{1}:\frac{f(v_{1})}{g(v_{1})}\mapsto \frac{\sigma _{x}(f)(v_{1})}{%
\sigma _{x}(g)(v_{1})}
$$
for any polynomials $$ f[X_{1}],g[X_{1}]\in F\left( x\right) [X_{1}]
$$ with $$g[X_{1}]\neq 0.$$

It is easily seen that $$g(v_{1})=0$$ if and only if $${\sigma _{x}(g)(v_{1})}
=0 .$$ Hence, the map $\sigma_{1}$ is well-defined.

Similarly, for the elements $$v_{1},v_{2},\cdots ,v_{s}\in L$$ that
are variables over $F(x)$, there is an isomorphism
$$
\sigma _{s}: F\left( x,v_{1},v_{2},\cdots ,v_{s}\right)\longrightarrow
F\left( z,v_{1},v_{2},\cdots ,v_{s}\right)
$$
of fields defined by
$$
x\longmapsto z\text{ and }v_{i}\longmapsto v_{i}
$$
for $1\leq i\leq s$, where we have the restrictions
$$
\sigma_{i+1}|_{F\left( x,v_{1},v_{2},\cdots ,v_{i}\right)}=\sigma_{i}
.$$

If $s=m$, we have $$L=F\left( x,v_{1},v_{2},\cdots ,v_{s}\right).$$
Then the field $F\left( z,v_{1},v_{2},\cdots
,v_{s}\right)$ is an $F-$conjugation of $L$.

In the following we put $$s\leqslant
m-1.$$

\emph{Step 2}. Let $s=0$. It reduces to consider the case
$v_{s+1}\in L$ since $v_{s+1}$ is algebraic over the field
$F(v_{1},v_{2},\cdots ,v_{s})\subseteq L$.

There are two cases for the
element $v_{s+1}$.

\emph{Case (i)}. Suppose that $z$ is not contained in
$F(v_{1},v_{2},\cdots,v_{s+1})$.

There is an isomorphism $$\sigma _{s+1}:F\left(
x,v_{1},v_{2},\cdots ,v_{s+1}\right) \cong F\left(
z,v_{1},v_{2},\cdots ,v_{s+1}\right) $$ of fields given by
\begin{equation*}
x\longmapsto z\text{ and }v_{i}\longmapsto v_{i}
\end{equation*}
with $1\leq i\leq s+1.$

It is seen that the map $\sigma_{s+1}$ is well-defined. In deed, by the below
\emph{Claim 2.2} it is seen that $$f\left( v_{s+1}\right) =0$$
holds if and only if $$\sigma_{s}\left( f\right) \left(
v_{s+1}\right) =0$$ holds for any polynomial $$f\left( X_{s+1}\right)
\in F\left( x,v_{1},v_{2},\cdots,v_{s}\right) \left[ X_{s+1}\right].
$$

\emph{Case (ii)}. Suppose that $z$ is contained in the field
$F(v_{1},v_{2},\cdots,v_{s+1})$.

By the below \emph{Claim 2.3} we have an  element
$v_{s+1}^{\prime}$ contained in an extension of $F$ such that the
fields $F(x,v_{s+1})$ and $F(z, v_{s+1}^{\prime})$ are isomorphic
over $F$.

Then by the same procedure as in \emph{Case (i)} of
\emph{Claim 2.2} it is seen that the two fields
$$F(x,v_{s+1},v_{1},v_{2},\cdots,v_{s})\cong F(z,v_{s+1}^{\prime},v_{1},v_{2},\cdots,v_{s})$$ are isomorphic over
$F$.

Hence, in such a manner we have an $F-$isomorphism $ \tau$ from the
field
$$F\left( x,v_{1},v_{2},\cdots ,v_{s},v_{s+1},\cdots, v_{m}\right)$$
onto the field of the form $$F\left( z,v_{1},v_{2},\cdots
,v_{s},w_{s+1},\cdots, w_{m}\right)$$ such that $$\tau (x)=z,\tau
(v_{1})=v_{1},\cdots,\tau (v_{s})=v_{s},$$ where
$$w_{s+1},w_{s+2},\cdots, w_{m}$$ are elements contained in an
extension of $F$. This completes the proof.
\end{proof}

\begin{claim}
Take an element $f$ in the polynomial ring $F\left[
X,X_{1},X_{2},\cdots,X_{s+1}\right]$. Suppose that $z$ is not
contained in the field $F(v_{1},v_{2},\cdots,v_{s+1})$. Then
$$f\left( x,v_{1},v_{2},\cdots,v_{s+1}\right) =0$$ holds if and only
if $$f\left( z,v_{1},v_{2},\cdots,v_{s+1}\right) =0$$ holds.
\end{claim}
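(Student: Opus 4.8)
The plan is to read the statement as the assertion that $(z,v_1,\dots,v_{s+1})$ is a \emph{generic} specialization of $(x,v_1,\dots,v_{s+1})$ over $F$. If $I_x$ and $I_z$ denote the kernels of the two evaluation homomorphisms
\[
F[X,X_1,\dots,X_{s+1}]\longrightarrow \overline{L},\qquad X\mapsto x\ (\text{resp. }z),\quad X_i\mapsto v_i ,
\]
then $I_x$ and $I_z$ are prime ideals, and the claim is exactly $I_x=I_z$. Writing $E=F(v_1,\dots,v_{s+1})$ and letting $m_x,m_z\in E[X]$ be the minimal polynomials of $x$ and $z$ over $E$, one has $f(x,v_1,\dots,v_{s+1})=0$ iff $m_x$ divides $f(X,v_1,\dots,v_{s+1})$ in $E[X]$, and similarly for $z$. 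So the entire statement reduces to the single assertion $m_x=m_z$, i.e. that $x$ and $z$ remain conjugate over the larger field $E$.

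First I would record the transcendence-degree bookkeeping, since this is what Weil's theory exploits. As $v_1,\dots,v_s$ are a transcendence basis of $L$ over $F(x)$ they are algebraically independent over $F$, and $v_{s+1}$ (a linear-basis element) is algebraic over $F(x,v_1,\dots,v_s)$, so $\operatorname{trdeg}_F F(x,v_1,\dots,v_{s+1})=s$. Since $F(v_1,\dots,v_{s+1})\subseteq F(x,v_1,\dots,v_{s+1})$ while $v_1,\dots,v_s$ are already independent, this forces $\operatorname{trdeg}_F F(v_1,\dots,v_{s+1})=s$; as $z$ is algebraic over $F$ we conclude
\[
\operatorname{trdeg}_F F(x,v_1,\dots,v_{s+1})=\operatorname{trdeg}_F F(z,v_1,\dots,v_{s+1})=s ,
\]
so the prospective specialization $(x,\vec v)\mapsto(z,\vec v)$ neither raises nor lowers dimension.

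Next I would build the comparison map in stages, mirroring the construction of $\tau$. The conjugacy of $x$ and $z$ over $F$ gives $\sigma_x\colon F(x)\xrightarrow{\ \sim\ }F(z)$ with $x\mapsto z$; since a purely transcendental extension is linearly disjoint from an algebraic one, $\sigma_x$ extends to $\sigma_s\colon F(x,v_1,\dots,v_s)\xrightarrow{\ \sim\ }F(z,v_1,\dots,v_s)$ fixing each $v_i$ (this is Step~1). Let $h\in F(x,v_1,\dots,v_s)[X_{s+1}]$ be the minimal polynomial of $v_{s+1}$ over $F(x,v_1,\dots,v_s)$. \emph{If one can show that $v_{s+1}$ is a root of the conjugate polynomial $\sigma_s(h)$}, then $\sigma_s$ extends to an $F$-isomorphism carrying $v_{s+1}$ to $v_{s+1}$, which gives $f(x,\vec v)=0\Rightarrow f(z,\vec v)=0$; Weil's criterion that a specialization preserving transcendence degree is generic (see \cite{weil}) then supplies the reverse implication, hence $I_x=I_z$.

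The hard part is precisely the italicized step: proving $\sigma_s(h)(v_{s+1})=0$, equivalently $m_x=m_z$, equivalently that conjugacy over $F$ ascends to $E$. This is where the hypothesis $z\notin F(v_1,\dots,v_{s+1})$ must enter, and it is delicate, since conjugacy over $F$ does not ascend to an arbitrary intermediate field; the argument therefore cannot be purely formal and must invoke the specialization machinery of \S2 (with Claim~2.3 handling the complementary configuration). In particular one must also ensure $x\notin E$, for otherwise $m_x$ would be linear and the distinct element $z$ could not be its root, so the cleanest route is to feed the dimension equality above into Weil's generic-specialization theorem rather than to verify the root condition by elementary divisibility. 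I expect this ascent of conjugacy, together with the careful use of $z\notin F(v_1,\dots,v_{s+1})$ to exclude the degenerate configurations, to be the sole genuine obstacle; the remaining verifications are the routine well-definedness checks already indicated in Step~1.
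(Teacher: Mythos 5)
Your proposal contains a genuine gap, and it is the one you yourself flagged: everything you actually carry out (the reformulation of the Claim as $I_x=I_z$, the equivalence with $m_x=m_z$, i.e.\ with the assertion that $x$ and $z$ remain conjugate over $E=F(v_{1},v_{2},\cdots,v_{s+1})$, the transcendence-degree count, and the extension $\sigma_s$ over the purely transcendental part) is the routine portion, while the ``italicized step'' $\sigma_s(h)(v_{s+1})=0$, which you defer to unspecified ``specialization machinery,'' \emph{is} the entire content of the Claim. Your suggested shortcut --- feeding the dimension equality into Weil's generic-specialization criterion --- cannot close this gap, because that criterion upgrades an already-existing specialization of equal transcendence degree to a generic one; to know that $(z,v_{1},\cdots,v_{s+1})$ is a specialization of $(x,v_{1},\cdots,v_{s+1})$ at all is exactly the forward implication you left unproved. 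So the proposal, as written, proves neither direction.

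Moreover the deferred step is not merely delicate: under the stated hypothesis it is false, so no machinery can supply it. Take $F=K=\mathbb{Q}$, $L=\mathbb{Q}(2^{1/4},t)$, $x=2^{1/4}$, $z=i\,2^{1/4}\in\overline{L}$ (a conjugate of $x$ over $\mathbb{Q}$), $s=1$, and the $(1,2)$-nice basis $v_{1}=t$, $v_{2}=\sqrt{2}$ of $L$ over $F(x)$; this is legitimate since any nonzero element is a linear basis of $L$ over $F(x)(v_{1})=L$ (if one objects to the degree-one case, adjoin $\sqrt{3}$ to $L$ and take the basis $\{\sqrt{2},\sqrt{6}\}$, which changes nothing below). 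The hypothesis holds: $z\notin F(v_{1},v_{2})=\mathbb{Q}(t,\sqrt{2})$, a real field. Yet $f(X,X_{1},X_{2})=X^{2}-X_{2}$ gives $f(x,v_{1},v_{2})=0$ while $f(z,v_{1},v_{2})=-2\sqrt{2}\neq 0$. The obstruction is precisely what you sensed: $z\notin E$ does not prevent $E$ from containing algebraic elements (here $\sqrt{2}=x^{2}$) that separate $x$ from $z$, so conjugacy over $F$ need not ascend to $E$. For comparison, the paper's own proof founders at the same spot: its Case (ii) (the relevant one here, as $v_{2}\in\overline{F}$) infers from $z\notin F(v_{s+1})$ an $F$-isomorphism of $F(x,v_{s+1})$ onto $F(z,v_{s+1})$ carrying $x$ to $z$ and fixing $v_{s+1}$, which is impossible since any such map sends $\sqrt{2}=x^{2}$ to $z^{2}=-\sqrt{2}$; and its Case (i) assertion that $F(v_{1},\cdots,v_{s+1})$ is regular over $F$ fails for the variant $v_{2}=\sqrt{2}\,t$. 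A correct statement requires a stronger hypothesis --- e.g.\ that $F$ is algebraically closed in $F(v_{1},\cdots,v_{s+1})$, or outright that $x$ and $z$ are conjugate over that field --- so the right move for you (and for the paper) is to repair the statement, not to search for a cleverer proof.
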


\begin{proof}
Here we use Weil's algebraic theory of specializations (See \cite{weil})
 to give the proof. For the element $v_{s+1}$, there is either
$$v_{s+1}\in \overline{F}$$  or
$$v_{s+1}\in \overline{F(v_{1},v_{2},\cdots,v_{s+1})}\setminus
\overline{F}$$ where $\overline{F}$ denotes the algebraic closure
of the field $F$.

\emph{Case (i)}. Let $v_{s+1}\in
\overline{F(v_{1},v_{2},\cdots,v_{s+1})}\setminus \overline{F}$.

By \emph{Theorem 1} in \cite{weil}, \emph{Page 28},  it is clear that
$\left( z\right) $ is a (generic) specialization of $\left( x\right)
$ over $F$ since $z$ and $x$ are conjugates over
 $F$.
From \emph{Proposition 1} in \cite{weil}, \emph{Page 3}, it is seen that
$F\left( v_{1},v_{2},\cdots ,v_{s+1}\right) $ and the field $F(x)$
are free with respect to each other over $F$ since $x$ is algebraic
over $F$. Hence, $F\left( v_{1},v_{2},\cdots ,v_{s+1}\right) $ is
a free field over $F$ with respect to $(x)$.

By \emph{Proposition 3}
in \cite{weil}, \emph{Page 4}, it is seen that $F\left( v_{1},v_{2},\cdots
,v_{s+1}\right) $ and the algebraic closure $\overline{F}$ are
linearly disjoint over $F$. Then $F\left( v_{1},v_{2},\cdots
,v_{s+1}\right) $ is a regular extension of $F$ (see
\cite{weil}, \emph{Page 18}).

In virtue of \emph{Theorem 5} in \cite{weil}, \emph{Page 29}, it is seen that
$$\left( z,v_{1},v_{2},\cdots,v_{s+1}\right) $$ is a (generic)
specialization of $$\left( x,v_{1},v_{2},\cdots,v_{s+1}\right) $$ over
$F$ since there are two specializations:

\qquad $\left( z\right) $ is a (generic) specialization of
$\left( x\right) $ over $F$;

\qquad $\left(v_{1},v_{2},\cdots,v_{s+1}\right) $ is a (generic)
specialization of $\left( v_{1},v_{2},\cdots,v_{s+1}\right) $
over $F$.

\emph{Case (ii)}. Let $v_{s+1}\in \overline{F}$.

By the assumption for $z$ it is seen that $z$ is not contained
in the field $F(v_{s+1})$. Then there is an
isomorphism between the fields $F(x,v_{s+1})$ and $F(z,v_{s+1})$. It
follows that $(z,v_{s+1})$ is a (generic) specialization of
$(x,v_{s+1})$ over $F$.

From the same procedure as in the above
\emph{Case (i)} it is seen that $$\left(
z,v_{s+1},v_{1},v_{2},\cdots,v_{s}\right) $$ is a (generic)
specialization of $$\left( x,v_{s+1},v_{1},v_{2},\cdots,v_{s}\right)
$$ over $F$.

Now take any polynomial $f\left( X,X_{1},X_{2}
,\cdots,X_{s+1}\right)$ over $F$. According to \emph{Cases
(i)-(ii)}, it is seen that $$f\left(
x,v_{1},v_{2},\cdots,v_{s+1}\right) =0$$ holds if and only if
$$f\left( z,v_{1},v_{2},\cdots,v_{s+1}\right) =0$$ holds from the preliminary facts of
 generic specializations.
\end{proof}

\begin{claim}
Assume that $F(u)$ and $F(u^{\prime})$
are isomorphic over $F$ given by $u\mapsto u^{\prime}$. Let $w$ be
an element contained in an extension of $F$. Then there is an
element $w^{\prime}$ contained in some extension of $F$ such that
the fields $F(u,w)$ and $F(u^{\prime},w^{\prime})$ are isomorphic
over $F$.
\end{claim}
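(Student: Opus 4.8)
The plan is to extend the given $F$-isomorphism $\phi\colon F(u)\to F(u')$ with $\phi(u)=u'$ across the single generator $w$, which reduces everything to the standard fact that an isomorphism of fields lifts to any simple extension. Since $F(u,w)$ is assumed to be a field, $u$ and $w$ lie in a common extension of $F$, so $w$ is either transcendental or algebraic over $F(u)$; I would treat these two cases separately. In both, the two points requiring care are that the lifted map still fixes $F$ pointwise and that the new generator $w'$ can be realized inside an extension of $F$.

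First, suppose $w$ is transcendental over $F(u)$, so that $F(u,w)=F(u)(w)$ is a field of rational functions. I would choose an element $w'$ transcendental over $F(u')$ inside a suitable extension of $F(u')$, hence of $F$, and then define
\[
\Phi\colon F(u)(w)\longrightarrow F(u')(w'),\qquad \frac{f(w)}{g(w)}\longmapsto\frac{\phi(f)(w')}{\phi(g)(w')},
\]
where $\phi$ is applied to the coefficients of $f,g\in F(u)[X]$ with $g\neq 0$. This is well defined and bijective by exactly the verification already carried out for the map $\sigma_1$ in Step 1 of the proof of Lemma 2.1; and since $\phi$ fixes $F$ while $\Phi(w)=w'$, the map $\Phi$ is the required $F$-isomorphism.

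Second, suppose $w$ is algebraic over $F(u)$, with minimal polynomial $p(X)\in F(u)[X]$. Applying $\phi$ to the coefficients of $p$ yields a polynomial $p^{\phi}(X)\in F(u')[X]$ which is again irreducible, because $\phi$ extends to a ring isomorphism $F(u)[X]\to F(u')[X]$ fixing $X$. I would let $w'$ be a root of $p^{\phi}$ in a fixed algebraic closure of $F(u')$, so that $w'$ lies in an extension of $F$. The isomorphism $\phi$ then induces
\[
F(u,w)=F(u)[X]/\bigl(p(X)\bigr)\;\cong\;F(u')[X]/\bigl(p^{\phi}(X)\bigr)=F(u',w'),
\]
carrying $w\mapsto w'$ and fixing $F$, which is again the desired $F$-isomorphism.

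The argument is essentially routine field theory, so no serious obstacle arises; the one point to watch is the algebraic case, where one must confirm that $p^{\phi}$ remains irreducible over $F(u')$ and that the quotient-ring description of a simple algebraic extension is compatible with $\phi$. Both follow at once from the observation that $\phi$ extends to a ring isomorphism of $F(u)[X]$ onto $F(u')[X]$. Collecting the two cases produces the element $w'$ and the $F$-isomorphism $F(u,w)\cong F(u',w')$ claimed.
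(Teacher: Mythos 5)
Your proof is correct, but it takes a genuinely different route from the paper's. The paper disposes of this claim in a single line by citing \emph{Proposition 4} on page 30 of Weil's \emph{Foundations of Algebraic Geometry}, i.e., the extension theorem for specializations; this keeps all of Section 2 inside Weil's specialization framework, which is also the machinery driving the proof of Claim 2.2. You instead give a self-contained argument from elementary field theory: split according to whether $w$ is transcendental or algebraic over $F(u)$, and in each case invoke the standard fact that a field isomorphism extends across a simple extension --- to $F(u)(w)\cong F(u')(w')$ with $w'$ a fresh transcendental in the first case, and to $F(u)[X]/(p)\cong F(u')[X]/(p^{\phi})$ with $w'$ a root of $p^{\phi}$ in an algebraic closure of $F(u')$ in the second. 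The two points needing care (well-definedness of the coefficientwise map in the transcendental case, irreducibility of $p^{\phi}$ in the algebraic case) are exactly the ones you address, so there is no gap. Your construction in fact delivers slightly more than the bare statement: the isomorphism extends the given $\phi$ and sends $w\mapsto w'$, which is what the application in Case (ii) of the proof of Lemma 2.1 actually uses. What the paper's citation buys is brevity and uniformity with Claim 2.2; what your argument buys is independence from the specialization machinery, making it plain that the claim is a routine exercise in basic field theory rather than a consequence of Weil's theory.
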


\begin{proof}
It is immediate from \emph{Proposition 4} in \cite{weil}, \emph{Page 30}.
\end{proof}

\section{Properties of Transcendental Galois Extensions}

In this section we will have several properties of Galois extensions in virtue of conjugation and quasi-galois and then we will draw a comparison between the Galois extensions.

\subsection{Quasi-galois and conjugation}

Let $L$ be a finitely generated extension of a field $K$.

\begin{lemma}
The following statements are equivalent:
\begin{itemize}
\item $L$ is quasi-galois over $K$.

\item Each conjugation of $L$ with respect to some nice basis of $L$ over $K$ must be contained in $L$.

\item There is a nice basis $(\Delta, A)\in \Sigma[L/K]$ such that $L$
contains each conjugate of $F\left( x\right) $ over $F$ for any $x\in L\setminus K(\Delta)$ and any subfield $K(\Delta)\subseteq F\subseteq L$.
\end{itemize}
\end{lemma}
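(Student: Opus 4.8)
The plan is to treat all three statements as the standard equivalent characterizations of the algebraic extension $L/K(\Delta)$ being \emph{normal}, once a suitable transcendence basis $\Delta$ has been chosen. Write $k=K(\Delta)$; since $L$ is finitely generated over $K$, for every nice basis $(\Delta,A)\in\Sigma[L/K]$ the extension $L/k$ is finite and hence algebraic, so the classical normality theory applies. I would prove the cycle $(1)\Rightarrow(3)\Rightarrow(2)\Rightarrow(1)$, taking care to keep the existential quantifier ``there is some $(\Delta,A)$'' of the quasi-galois definition aligned with the existential quantifiers over nice bases appearing in $(2)$ and $(3)$: each implication is carried out with the \emph{same} witnessing basis that is handed to it.

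For $(1)\Rightarrow(3)$ I would fix the nice basis $(\Delta,A)$ witnessing that $L/k$ is normal and show that normality is inherited by every intermediate extension. Let $k\subseteq F\subseteq L$ and $x\in L\setminus k$, and let $z\in\overline{L}$ be any conjugate of $x$ over $F$ (here $x$ is algebraic over $F$ because $L/k$ is algebraic). By Lemma 2.1 applied to $F$, $x$ and $z$ there is an $F$-isomorphism $\tau$ of $L$ onto a field $\tau(L)\subseteq\overline{L}$ with $\tau(x)=z$. Since $k\subseteq F$, the map $\tau$ is in particular a $k$-embedding, so $\tau(L)$ is a conjugation of $L$ with respect to $(\Delta,A)$; normality of $L/k$ forces $\tau(L)=L$, whence the conjugate $F(z)=\tau(F(x))\subseteq\tau(L)=L$. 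Thus every conjugate of $F(x)$ over $F$ lies in $L$, which is exactly $(3)$ for the same basis.

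For $(3)\Rightarrow(2)$ I would take the basis from $(3)$ and specialize its hypothesis to $F=k$. A conjugation of $L$ with respect to $(\Delta,A)$ is, by definition, $\sigma(L)$ for some $k$-embedding $\sigma:L\to\overline{L}$. For $x\in k$ we have $\sigma(x)=x\in L$, while for $x\in L\setminus k$ the image $\sigma(x)$ is a conjugate of $x$ over $k$, and $(3)$ with $F=k$ guarantees $k(\sigma(x))\subseteq L$, hence $\sigma(x)\in L$. Therefore $\sigma(L)\subseteq L$, which is $(2)$. Finally, for $(2)\Rightarrow(1)$ I would fix the basis from $(2)$ and check the defining property of quasi-galois directly: given an irreducible $f(X)\in k[X]$ with a root $x\in L$ and any root $z\in\overline{L}$, Lemma 2.1 supplies a $k$-isomorphism $\tau$ of $L$ onto $\tau(L)$ with $\tau(x)=z$, so $\tau(L)$ is a conjugation of $L$ with respect to $(\Delta,A)$ and $(2)$ gives $\tau(L)\subseteq L$; in particular $z=\tau(x)\in L$. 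Hence $f$ factors completely into linear factors in $L[X]$, and $L$ is quasi-galois over $K$ with respect to $(\Delta,A)$.

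The whole argument is the classical equivalence ``normal $\Leftrightarrow$ stable under every $k$-embedding $\Leftrightarrow$ complete splitting of irreducible polynomials,'' transported through the nice-basis formalism, with Lemma 2.1 serving as the mechanism that extends a conjugation of a single element to a conjugation of all of $L$. I expect the step requiring the most care to be $(1)\Rightarrow(3)$: one must see that the single normality hypothesis over $k$ controls conjugates over \emph{all} the intermediate fields $F$ simultaneously, and the clean way to do this is the observation that an $F$-isomorphism extending a conjugation of $x$ is automatically a $k$-isomorphism, so that the $F$-conjugations are a subfamily of the $k$-conjugations already pinned down by $(1)$. The remaining subtlety is purely bookkeeping of the existential quantifiers, together with the harmless edge cases $L=k$ and $F=L$, in which every relevant polynomial is linear and the conclusions are immediate.
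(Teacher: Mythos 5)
Your proposal is correct, and its engine is the same as the paper's: Lemma 2.1 is what converts a conjugate $z$ of a single element into a conjugation of all of $L$, and the reverse direction is an element-by-element containment argument. The packaging differs slightly. The paper uses statement (3) as the hub: it gets $(1)\Leftrightarrow(3)$ in one line by comparing minimal polynomials (for $K(\Delta)\subseteq F\subseteq L$ the minimal polynomial of $x$ over $F$ divides the one over $K(\Delta)$, so the splitting hypothesis in (1) already forces every $F$-conjugate of $x$ into $L$), and then proves $(2)\Rightarrow(3)$ via Lemma 2.1 and $(3)\Rightarrow(2)$ element-wise. You instead run the cycle $(1)\Rightarrow(3)\Rightarrow(2)\Rightarrow(1)$: your $(3)\Rightarrow(2)$ and $(2)\Rightarrow(1)$ are the paper's two arguments almost verbatim (the latter aimed at (1) rather than (3)), while your $(1)\Rightarrow(3)$ detours through the classical equivalence of normality with stability under $k$-embeddings, where the divisibility remark is shorter and avoids Lemma 2.1 altogether.

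One respect in which your write-up is sharper than the paper's is the explicit decision to bind all three statements to a single common witnessing basis. This is not mere bookkeeping: under the universal reading of (2), namely that every conjugation with respect to every nice basis lies in $L$, the lemma is false. For $L=\mathbb{Q}(t)$ and $K=\mathbb{Q}$, statement (1) holds via the basis $\{t\}$, yet the conjugation attached to the nice basis $\{t^{3}+t\}$ is generated by a root of $X^{2}+tX+t^{2}+1$ over $\mathbb{Q}(t^{3}+t)$ and is not contained in $\mathbb{Q}(t)$; this is exactly the Moret-Bailly phenomenon the paper itself cites in Example 1.4. Indeed, the paper's own proof of $(3)\Rightarrow(2)$ quietly applies the hypothesis of (3) with $F=K_{0}$, the field generated by the transcendence basis attached to the conjugation $H$, which is legitimate only when that basis is the one supplied by (3), since otherwise $K(\Delta)\subseteq K_{0}$ can fail; your aligned-quantifier formulation is precisely the reading under which that step is valid.
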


\begin{proof}
$1st\Longleftrightarrow 3rd$. It is immediate from the fact that  $F(x)$ is algebraic over $F$.

$2ed\implies 3rd$. Take an $x\in L\setminus K(\Delta)$ and a subfield $K(\Delta)\subseteq F\subseteq L$.
Then $x$ is algebraic over $F$.

Let $z$ be an $F$-conjugate of $x $.
If $F=L$, the field $F[z]=L[z]$ is a conjugate of $L[x]=L$ over $L$; then $L[z]$ is a conjugation of $L$ over $K$. Hence,
 we have $z\in L[z]\subseteq L$ by the assumption.

Now suppose $F\subsetneqq L$. By \emph{Lemma 2.1} there is a field $M$ of the form
\begin{equation*}
M \triangleq F\left( z,v_{1},v_{2},\cdots ,v_{s},w_{s+1},\cdots, w_{m}\right),
\end{equation*}
that is isomorphic to $L$ over $F$.  As $K \subseteq F$, it is seen that $M$ is a conjugation of $L$ over $K$. From the assumption again,  we have
\begin{equation*}
z\in M\subseteq L.
\end{equation*}
This proves $z \in L$.

$3rd\implies 2ed$. Fixed a conjugation $H$ of $L$ over $K$. Let $x_{0}\in H$.
Take a $(r,n)$-nice basis $w_{1},w_{2},\cdots ,w_{n} $ of $L$ over $K$ and a $K_{0}$-isomorphism $\sigma:H\rightarrow L$
such that $H$ is a $K$-conjugation of $L$ by $\sigma$, where $$
K_{0}\triangleq K(w_{1},w_{2},\cdots ,w_{r} ).
$$

It is seen that that $x_{0}$ must be algebraic over $K_{0}$ since $w_{1}, w_{2},\cdots ,w_{r} $ are all contained in the
intersection of $H$ and $L$.

Then the field $K_{0}[x_{0}]$, which is a conjugate of  $K_{0}[\sigma(x_{0})]$
over $K_{0} $, is a conjugation of $K_{0}[\sigma(x_{0})]$ over $K$.
From the assumption we have $x_{0}\in
K_{0}[x_{0}]\subseteq L$; hence, $H \subseteq L$.
\end{proof}

\begin{proposition}
The following statements are equivalent:
\begin{itemize}
\item $L$ is quasi-galois over $K$.

\item There exists exists one and only one conjugation of $L$  with respect to some nice basis of $L$ over $K$.
\end{itemize}
\end{proposition}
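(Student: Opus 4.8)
The plan is to derive the proposition from Lemma 3.1 together with a single degree count, exploiting the standing hypothesis that $L$ is a finitely generated extension of $K$. Before treating either implication I would record two observations. First, for any nice basis $(\Delta, A)\in\Sigma[L/K]$ the extension $L/K(\Delta)$ is finite algebraic, so $[L:K(\Delta)]<\infty$. Second, $L$ is always a conjugation of $L$ with respect to $(\Delta,A)$, witnessed by the identity map (a $K(\Delta)$-isomorphism $L\to L$). Hence the assertion \emph{``there is one and only one conjugation with respect to $(\Delta,A)$''} is equivalent to saying that the unique conjugation is $L$ itself, i.e.\ that every conjugate of $L$ over $K(\Delta)$ inside $\overline{L}$ coincides with $L$.

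For the direction \emph{quasi-galois} $\Rightarrow$ \emph{unique conjugation}, I would invoke Lemma 3.1, which tells us that quasi-galois over $K$ forces every conjugation of $L$ over $K$ to be contained in $L$. Fixing a nice basis $(\Delta,A)$ witnessing the quasi-galois property, let $M$ be any conjugation of $L$ with respect to $(\Delta,A)$. Then $K(\Delta)\subseteq M\subseteq L$ (the containment $M\subseteq L$ coming from Lemma 3.1) and $[M:K(\Delta)]=[L:K(\Delta)]$ since $M$ is a conjugate of $L$ over $K(\Delta)$. The tower law $[L:M]\,[M:K(\Delta)]=[L:K(\Delta)]$ then yields $[L:M]=1$, that is $M=L$. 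So the set of conjugations with respect to $(\Delta,A)$ is exactly $\{L\}$, giving the required uniqueness.

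For the converse I would start from a nice basis $(\Delta,A)$ admitting a unique conjugation, which by the first paragraph must be $L$; thus every $K(\Delta)$-conjugate of $L$ in $\overline{L}$ equals $L$. To verify quasi-galois with respect to $(\Delta,A)$ directly, take an irreducible $f(X)\in K(\Delta)[X]$ with a root $\alpha\in L$ and let $\beta\in\overline{L}$ be any further root, so that $\beta$ is a conjugate of $\alpha$ over $K(\Delta)$. Applying Lemma 2.1 with $F=K(\Delta)$, $x=\alpha$, $z=\beta$ (the case $L=K(\Delta)$ being trivial, since every such $f$ is then linear) produces a conjugation $M$ of $L$ over $K(\Delta)$, hence a conjugation with respect to $(\Delta,A)$, that contains $\beta$. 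Uniqueness gives $M=L$, so $\beta\in L$, and therefore $f$ splits completely in $L[X]$. This is exactly quasi-galois over $K$. Equivalently, one may phrase the whole converse as the classical fact that a finite extension $L/K(\Delta)$ with a single conjugate is normal, which is the same as the stated splitting condition.

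The step I expect to be the main obstacle is the passage from \emph{containment} to \emph{equality} of conjugations: Lemma 3.1 only delivers $M\subseteq L$, and it is precisely the finiteness of $[L:K(\Delta)]$, guaranteed by the finite-generation hypothesis, that upgrades this to $M=L$ through the degree count. The secondary delicate point is the bookkeeping in the converse, where one must ensure the conjugation produced by Lemma 2.1 is a conjugate of $L$ over $K(\Delta)$ (and not merely over some larger intermediate field), so that uniqueness with respect to the chosen nice basis $(\Delta,A)$ may legitimately be applied.
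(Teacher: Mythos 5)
Your proposal is correct and follows essentially the same route as the paper: for quasi-galois $\Rightarrow$ uniqueness, both arguments use Lemma 3.1 to get the containment $M\subseteq L$ and then the finiteness of $[L:K(\Delta)]$ to upgrade containment to equality (your tower-law phrasing is just a cleaner version of the paper's contradiction with an element $x_{0}\in L\setminus H$). For the converse the paper simply writes ``immediate from Lemma 3.1,'' and your explicit construction of a conjugation containing an arbitrary root $\beta$ via Lemma 2.1 is exactly the mechanism inside the proof of Lemma 3.1, so it is the same approach spelled out in more detail rather than a genuinely different one.
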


\begin{proof}
Prove $\Longleftarrow$. It is immediate from \emph{Lemma 3.1}.

Prove $\Longrightarrow$. Let $L$ be quasi-galois over $K$ with respect to a $(s,m)$-nice basis $v_{1},v_{2},\cdots ,v_{m}$ of $L$ over
$K$. Take a conjugation $H$ of $L$ over $K$ and an $F$-isomorphism $\tau$ of $H$ onto $L$ such that via $\tau$ $H$ is a conjugate of $L$ over $F$, where
\begin{equation*}
F\triangleq k(v_{1},v_{2},\cdots ,v_{s}).
\end{equation*}
Then we have $F\subseteq H\subseteq L $ from \emph{Lemma 3.1} again. We prove $H=L$.

Hypothesize $H\subsetneqq L$. Take any $x_{0}\in L\setminus H$. Then
 $x_{0}$ is algebraic over $F$ and $H$, respectively. We have
\begin{equation*}
[H:F]=[L:F]< \infty
\end{equation*}
since $H$ is a conjugate of $L$ over $F$. On the other hand, we have
\begin{equation*}
2+[H:F]\leq[{{{{{H[x_{0}]:F}}}}}]\leq[L:F]
\end{equation*}
from $x_{0}\in L\setminus H$, which will be in contradiction.
\end{proof}

Replacing a nice basis by every linear disjoint basis, we have the following lemma and proposition for strong conjugation and strong quasi-galois.

\begin{lemma}
The following statements are equivalent:
\begin{itemize}
\item $L$ is strong quasi-galois over $K$.

\item Each strong conjugation of $L$ over $K$ is contained in $L$.

\item Fixed each  $(\Delta, A)\in \Sigma[L/K]_{ld}$. Then $L$
contains every conjugate of $F\left( x\right) $ over $F$ for any $x\in L\setminus K(\Delta)$ and any subfield $K(\Delta)\subseteq F\subseteq L$.
\end{itemize}
\end{lemma}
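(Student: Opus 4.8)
The plan is to mirror the proof of \emph{Lemma 3.1}, replacing throughout a nice basis by an arbitrary linearly disjoint basis, a conjugation by a strong conjugation, and the existential quantifier over $\Sigma[L/K]$ by the universal quantifier over $\Sigma[L/K]_{ld}$. Implicit in all three statements is that $\Sigma[L/K]_{ld}\neq\emptyset$, which I will carry as a standing hypothesis. Concretely, I would establish the two equivalences $1st\Longleftrightarrow 3rd$ and $2ed\Longleftrightarrow 3rd$, which together give the lemma.

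For $1st\Longleftrightarrow 3rd$ the argument is the same as in \emph{Lemma 3.1}, carried out one linearly disjoint basis at a time. Fix $(\Delta,A)\in\Sigma[L/K]_{ld}$. Since $L=K(\Delta)[A]$ is algebraic over $K(\Delta)$, the condition that every irreducible polynomial over $K(\Delta)$ with a root in $L$ splits in $L[X]$ is exactly the normality of the algebraic extension $L/K(\Delta)$; and this normality is equivalent, by the standard descent of normality to intermediate fields, to the assertion that $L$ contains every $F$-conjugate of every $x\in L$ for each $K(\Delta)\subseteq F\subseteq L$ (the minimal polynomial of $x$ over $F$ divides that over $K(\Delta)$ in $F[X]$). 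Quantifying over all of $\Sigma[L/K]_{ld}$ yields the equivalence of the first and third statements.

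The implication $3rd\Longrightarrow 2ed$ is in fact easier than in \emph{Lemma 3.1}. Let $H$ be a strong conjugation of $L$ over $K$. Using $\Sigma[L/K]_{ld}\neq\emptyset$, choose one $(\Delta,A)\in\Sigma[L/K]_{ld}$ and a $K(\Delta)$-isomorphism $\sigma\colon H\to L$ exhibiting $H$ as a conjugate of $L$ over $K(\Delta)$; then $K(\Delta)\subseteq H$, so $\Delta\subseteq H\cap L$. For $x_{0}\in H$ the element $x_{0}$ is therefore algebraic over $K(\Delta)$, and $\sigma(x_{0})\in L$ is a $K(\Delta)$-conjugate of $x_{0}$; applying the third statement with $F=K(\Delta)$ (the case $\sigma(x_{0})\in K(\Delta)$ being trivial) gives $x_{0}\in L$, whence $H\subseteq L$.

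The real difficulty is $2ed\Longrightarrow 3rd$. Fix $(\Delta,A)\in\Sigma[L/K]_{ld}$, take $x\in L\setminus K(\Delta)$, an intermediate field $K(\Delta)\subseteq F\subseteq L$, and an $F$-conjugate $z$ of $x$; I would produce, exactly as in \emph{Lemma 3.1} via \emph{Lemma 2.1}, a field $M\ni z$ that is $F$-isomorphic to $L$ and hence a conjugate of $L$ over $K(\Delta)$. Here the naive mirror breaks down: statement $2ed$ only controls \emph{strong} conjugations, while $M$ is manifestly a conjugate of $L$ only over the single chosen subfield $K(\Delta)$, not over every $K(\Delta')$ with $(\Delta',A')\in\Sigma[L/K]_{ld}$. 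Indeed the isomorphism $\tau$ of \emph{Lemma 2.1} fixes $F\supseteq K(\Delta)$ but need not fix a second transcendence basis $\Delta'$, so a priori $\Delta'\not\subseteq M$ and $M$ is not yet seen to be a strong conjugation. This is the step I expect to be the main obstacle, since the whole force of the word ``strong'' is concentrated here. The plan to overcome it is to use the linear disjointness of the competing bases to show that $M$ can nonetheless be realized as a conjugate of $L$ over each $K(\Delta')$: equivalently, first reduce by $1st\Longleftrightarrow 3rd$ and the descent of normality to the case $F=K(\Delta)$, and then argue that a conjugate over one linearly disjoint basis is forced to be a conjugate over all of them, so that statement $2ed$ applies and yields $z\in M\subseteq L$.
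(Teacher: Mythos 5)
Your decomposition into $1st\Longleftrightarrow 3rd$, $3rd\Longrightarrow 2ed$, $2ed\Longrightarrow 3rd$ is sensible, and the first two parts of your argument are fine. For comparison: the paper gives no proof of this lemma at all; it is stated immediately after the sentence that the result follows from \emph{Lemma 3.1} by ``replacing a nice basis by every linear disjoint basis,'' so the naive mirroring you attempted is precisely the paper's (omitted) argument. The obstruction you flagged in $2ed\Longrightarrow 3rd$ is therefore not a defect of your write-up but a real gap in the lemma's intended proof: the field $M$ supplied by \emph{Lemma 2.1} is a conjugate of $L$ only over the single chosen $K(\Delta)$, whereas hypothesis $2ed$ constrains only strong conjugations. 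Your proposal stops at this point, leaving the bridging claim --- that a conjugate of $L$ over one linearly disjoint basis is automatically a conjugate over all of them --- as a plan rather than a proof, so the proposal is incomplete as it stands.

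Worse, the bridging claim is false, so the gap cannot be closed along the line you propose (nor along the paper's). Take $K=\mathbb{Q}$, $L=\mathbb{Q}(\sqrt[3]{2},t)$, and let $\omega$ be a primitive cube root of unity. Both $(\{t\},\{1,\sqrt[3]{2},\sqrt[3]{4}\})$ and $(\{\sqrt[3]{2}\,t\},\{1,\sqrt[3]{2},\sqrt[3]{4}\})$ lie in $\Sigma[L/K]_{ld}$, since $\mathbb{Q}(t)$ and $\mathbb{Q}(\sqrt[3]{2}\,t)$ are purely transcendental, hence regular, hence linearly disjoint from $\mathbb{Q}(\sqrt[3]{2})$ over $\mathbb{Q}$. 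The conjugates of $L$ over $\mathbb{Q}(t)$ are the fields $\mathbb{Q}(\omega^{j}\sqrt[3]{2},t)$ with $j=0,1,2$, while the conjugates of $L$ over $\mathbb{Q}(\sqrt[3]{2}\,t)$ are the fields $\mathbb{Q}(\omega^{j}\sqrt[3]{2},\omega^{-j}t)$, because an embedding fixing $\sqrt[3]{2}\,t$ must twist $t$ by the inverse of the root of unity by which it twists $\sqrt[3]{2}$. A degree count shows the two lists meet only in $L$: for instance $\mathbb{Q}(\omega\sqrt[3]{2},t)=\mathbb{Q}(\omega\sqrt[3]{2},\omega^{2}t)$ would force $\omega$, and hence both $\omega$ and $\sqrt[3]{2}$, into a field of degree $3$ over $\mathbb{Q}(t)$. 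Consequently $M=\mathbb{Q}(\omega\sqrt[3]{2},t)$ is a conjugation of $L$ with respect to one linearly disjoint basis but is \emph{not} a strong conjugation; in fact $L$ is its own unique strong conjugation here, so statement $2ed$ holds for this $L$ while statements $1st$ and $3rd$ fail, since $X^{3}-2$ is irreducible over $\mathbb{Q}(t)$, has the root $\sqrt[3]{2}$ in $L$, and does not split in $L$. So the implication $2ed\Longrightarrow 1st$, hence the lemma itself, is false as stated; the correct strong analogue of \emph{Lemma 3.1} requires the second statement to be strengthened to ``every conjugation of $L$ with respect to every linearly disjoint basis is contained in $L$,'' and with that reading both your mirroring and the paper's go through verbatim.
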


\begin{proposition}
The following statements are equivalent:
\begin{itemize}
\item $L$ is strong quasi-galois over $K$.

\item There exists one and only one strong conjugation of $L$ over $K$.
\end{itemize}
\end{proposition}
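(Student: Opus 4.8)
The plan is to imitate the proof of \emph{Proposition 3.2} almost verbatim, with two replacements: the role of \emph{Lemma 3.1} is now played by \emph{Lemma 3.3}, and the phrase ``conjugation'' is everywhere replaced by ``strong conjugation.'' The only new ingredient is that the relevant transcendence-generated subfield is $K(\Delta)$ for a \emph{linearly disjoint} basis $(\Delta, A)\in\Sigma[L/K]_{ld}$, rather than for an arbitrary nice basis; so throughout I would fix once and for all some $(\Delta, A)\in\Sigma[L/K]_{ld}$ and set $F\triangleq K(\Delta)$.

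For the direction $\Longleftarrow$, suppose there exists one and only one strong conjugation of $L$ over $K$. The mere existence of a strong conjugation forces $\Sigma[L/K]_{ld}\neq\emptyset$, and $L$ itself is visibly a strong conjugation of $L$ over $K$, since for every $(\Delta,A)\in\Sigma[L/K]_{ld}$ the identity map is an $F$-isomorphism exhibiting $L$ as a conjugate of $L$ over $K(\Delta)$. Hence the unique strong conjugation must be $L$, so in particular every strong conjugation is contained in $L$, and \emph{Lemma 3.3} then yields that $L$ is strong quasi-galois over $K$.

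For the direction $\Longrightarrow$, suppose $L$ is strong quasi-galois over $K$. Then $\Sigma[L/K]_{ld}\neq\emptyset$, and by \emph{Lemma 3.3} every strong conjugation of $L$ over $K$ is contained in $L$. Let $H$ be an arbitrary strong conjugation; I must show $H=L$. Fix $(\Delta,A)\in\Sigma[L/K]_{ld}$ and put $F=K(\Delta)$. Since $H$ is a strong conjugation, it is a conjugate of $L$ over $F$ by some $F$-isomorphism, whence $F\subseteq H\subseteq L$ and $[H:F]=[L:F]<\infty$, the finiteness coming from $L$ being finitely generated with $A$ a finite linear basis of $L$ over $F$. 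If $H\subsetneqq L$, choose $x_{0}\in L\setminus H$; then $H[x_{0}]\subseteq L$ gives
\begin{equation*}
[L:F]\geq [H[x_{0}]:F]=[H[x_{0}]:H]\cdot[H:F]\geq 2[H:F]>[H:F]=[L:F],
\end{equation*}
a contradiction. Therefore $H=L$, so $L$ is the only strong conjugation, completing the equivalence.

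The step I expect to be most delicate is justifying that the finiteness $[L:F]<\infty$ and the degree equality $[H:F]=[L:F]$ genuinely persist after restricting attention to linearly disjoint bases --- that is, that for $(\Delta,A)\in\Sigma[L/K]_{ld}$ the field $L$ remains a finite algebraic extension of $K(\Delta)$ and that a strong conjugation really does restrict to an honest finite-degree $K(\Delta)$-conjugate of $L$. Once this is secured, the degree count closes exactly as in \emph{Proposition 3.2}, and no idea beyond \emph{Lemma 3.3} is needed.
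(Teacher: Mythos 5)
Your proposal is correct and is essentially the paper's own approach: the paper gives no separate proof of this proposition, stating only that the lemma and proposition for strong conjugation follow by ``replacing a nice basis by every linear disjoint basis'' in \emph{Lemma 3.1} and \emph{Proposition 3.2}, which is exactly the substitution you carry out (including the care taken that existence of a strong conjugation, or strong quasi-galois itself, forces $\Sigma[L/K]_{ld}\neq\emptyset$). Your multiplicative degree count $[H[x_{0}]:F]=[H[x_{0}]:H]\cdot[H:F]\geq 2[H:F]$ is in fact a cleaner version of the additive inequality $2+[H:F]\leq[H[x_{0}]:F]$ used in the paper's proof of \emph{Proposition 3.2}, and it closes the contradiction in the same way.
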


\begin{remark}
For strong Galois, we have the following results.
\begin{itemize}
\item Let $L$ be a finitely generated extension of a field $K$ such that  $L$ is strong Galois
over $K$. Then $L$ is a separably generated and strong quasi-galois extension over $K$.

\item Conversely, let $L^{\prime}$ be an extension of a field $K^{\prime}$ such that $L^{\prime}$ is strong quasi-galois and separably generated over $K^{\prime}$. Then $L^{\prime}$ is strong Galois
over $K^{\prime}$.

\item From \emph{Proposition 3.9} it is seen that a strong Galois extension is in deed a purely transcendental extension of an algebraic Galois extension of a field with respect to all possible linearly disjoint bases.
\end{itemize}
\end{remark}

\subsection{Strong Galois = a unique strong conjugation + separably generated}

As an analogue of algebraic extensions of fields, we have the following result for a strong Galois extension.

\begin{proposition}
Let $L$ be a finitely generated extension of a field $K$. The following statements are equivalent.
\begin{itemize}
\item $L$ is strong Galois over $K$.

\item $L$ is strong quasi-galois and separably generated over $K$.

\item $L/K$ is separably generated and $L$ has a unique  strong conjugation over $K$.
\end{itemize}
\end{proposition}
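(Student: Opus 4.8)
The plan is to prove the two nontrivial implications by passing to the algebraic extensions $L/K(\Delta)$ as $(\Delta,A)$ ranges over $\Sigma[L/K]_{ld}$, where the classical identity \emph{algebraic Galois} $=$ \emph{normal} $+$ \emph{separable} is available, and to dispose of the equivalence of the last two conditions by a direct appeal to \emph{Proposition 3.4}.

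First, the equivalence of the second and third statements is immediate: by \emph{Proposition 3.4}, $L$ is strong quasi-galois over $K$ if and only if $L$ has a unique strong conjugation over $K$, and the two statements merely adjoin to these the same hypothesis that $L/K$ is separably generated. Hence it suffices to prove that the first statement is equivalent to the second.

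For this I would unwind both definitions over the fields $K(\Delta)$. By definition $L$ is strong Galois over $K$ precisely when $\Sigma[L/K]_{ld}\neq\varnothing$ and $L/K(\Delta)$ is an algebraic Galois extension for every $(\Delta,A)\in\Sigma[L/K]_{ld}$, while $L$ is strong quasi-galois over $K$ precisely when $\Sigma[L/K]_{ld}\neq\varnothing$ and $L/K(\Delta)$ is normal (quasi-galois in the algebraic sense) for every such basis. Since each $L/K(\Delta)$ is a finite algebraic extension, the classical theorem that an algebraic extension is Galois if and only if it is both normal and separable reduces the whole matter to one assertion about separability, granting $\Sigma[L/K]_{ld}\neq\varnothing$: the extension $L/K(\Delta)$ is separable for every $(\Delta,A)\in\Sigma[L/K]_{ld}$ if and only if $L/K$ is separably generated. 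The forward direction is easy — fixing one linearly disjoint basis $(\Delta,A)$, separability of $L/K(\Delta)$ says exactly that $\Delta$ is a separating transcendence basis, so $L/K$ is separably generated.

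The reverse direction is the crux and the place I expect the real work. One must show that separable generation of $L/K$ forces $L/K(\Delta)$ to be separable for \emph{every} linearly disjoint basis, not merely for one; the linear-disjointness hypothesis is essential, since for $K=\mathbb{F}_p$ and $L=\mathbb{F}_p(x)$ the transcendence basis $\Delta=\{x^p\}$ gives a purely inseparable $L/K(\Delta)$, and indeed $(\{x^p\},\dots)$ fails to be a linearly disjoint basis because $K(\Delta)$ and $K(A)$ are not linearly disjoint over $K$. The step I would isolate as the main lemma is: if $E/K$ is separable and $E$, $F$ are linearly disjoint over $K$ inside a common field, then the compositum $EF$ is separable over $F$. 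Applying it with $E=K(A)$ and $F=K(\Delta)$ — noting that $L=K(\Delta)(A)=K(\Delta)\cdot K(A)$, that $K(A)/K$ is separable as a subextension of the separable $L/K$, and that $K(\Delta)$, $K(A)$ are linearly disjoint over $K$ by the definition of a linearly disjoint basis — yields that $L/K(\Delta)$ is separable, completing the reverse direction. I would prove the main lemma either through MacLane's criterion, whereby separability of $E/K$ means $E$ is linearly disjoint from $K^{1/p}$ over $K$ and one transports this across the tower $F\subseteq F^{1/p}$ using the transitivity of linear disjointness, or, more cleanly, by the base-change stability of geometric reducedness: separability of $E/K$ makes $E\otimes_K F$ geometrically reduced over $F$, and $EF$ is a localization of $E\otimes_K F$, hence separable over $F$. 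Combining these, the second statement implies the first, and the proposition follows.
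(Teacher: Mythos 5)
Your proposal is correct, and it does substantially more than the paper's own proof, which consists of two lines: the equivalence of the first and second statements is declared ``Trivial,'' and the equivalence of the first and third is said to be immediate from \emph{Proposition 3.4} and \emph{Remark 3.5}. The architecture is the same in both cases --- you and the paper both dispose of the conjugation condition by citing \emph{Proposition 3.4} (you pair it with the second statement, the paper with the first; this is cosmetic) --- but the paper never actually proves that strong Galois is equivalent to strong quasi-galois plus separably generated: the burden is shifted to \emph{Remark 3.5}, which merely restates that equivalence without proof. You correctly identify that this equivalence is not trivial in positive characteristic and isolate its real content: granting normality of $L/K(\Delta)$ for every linearly disjoint basis, one must show that separable generation --- the existence of \emph{one} separating transcendence basis --- forces $L/K(\Delta)$ to be separable for \emph{every} $(\Delta,A)\in\Sigma[L/K]_{ld}$. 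Your lemma (if $E/K$ is separable and $E$, $F$ are linearly disjoint over $K$, then $EF/F$ is separable), applied with $E=K(A)$ and $F=K(\Delta)$, is exactly the missing step, and your tensor-product argument for it is sound: linear disjointness makes $E\otimes_K F$ a domain whose fraction field is $EF$, so for any extension $\Omega/F$ the ring $EF\otimes_F\Omega$ is a localization of the reduced ring $E\otimes_K\Omega$, hence reduced, giving separability of $EF/F$; and your $\mathbb{F}_p(x)/\mathbb{F}_p(x^p)$ example correctly pinpoints why the linear-disjointness hypothesis cannot be dropped. In short: same skeleton, but your writeup supplies genuine mathematical substance at precisely the point where the paper (and its supporting \emph{Remark 3.5}) offers only an assertion.
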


\begin{proof}
$1st \Longleftrightarrow 2ed$. Trivial.

$1st \Longleftrightarrow 3rd$. It is immediate from \emph{Proposition 3.4} and \emph{Remark 3.5}.
\end{proof}

Likewise, we also have the same result for absolute Galois.

\begin{remark} Let $L$ be an extension of a field $K$. Then $L$ is absolute Galois over $K$ if and only if $L$ is absolute quasi-galois and separably generated over $K$, i.e.,
$$\emph{absolute Galois} =\emph{a unique absolute conjugation} + \emph{separably generated}.$$
\end{remark}

However, there does not exist such a relation for Galois.

\begin{remark} Let $L$ be an extension of $K$. Then
$L$ is tame Galois over $K$ if  $L$ is quasi-galois and separably generated over $K$. In general, the converse is not true , i.e.,
$$\emph{tame Galois} \not=\emph{a unique conjugation} + \emph{separably generated}.$$
See \emph{Example 1.5} for a counterexample.
\end{remark}

\subsection{A purely transcendental extension is strong Galois}

Now consider a purely transcendental extension of a field.

\begin{proposition}
Let $\Delta$ be a finite or an infinite (countable) set of algebraically independent variables over a field $K$. Then
\begin{itemize}
\item $K(\Delta)$ is Galois over $K$;

\item $K(\Delta)$ is strong Galois over $K$.
\end{itemize}
\end{proposition}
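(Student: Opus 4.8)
The plan is to treat the two assertions in turn, establishing the first by exhibiting enough $K$-automorphisms to pin down the fixed field, and the second by classifying the linearly disjoint bases of $K(\Delta)$ over $K$. Throughout I write $L=K(\Delta)$ and use the standard fact that $K$ is relatively algebraically closed in the purely transcendental extension $L$ (by transitivity of relative algebraic closure together with the one-variable case).

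For the assertion that $L$ is \emph{Galois} over $K$, I would compute the fixed field of $\mathrm{Gal}(L/K)$ directly and show it equals $K$. Given $f\in L\setminus K$, note that $f$ lies in $K(t_{1},\dots,t_{n})$ for finitely many $t_{i}\in\Delta$, and, since $K$ is algebraically closed in $L$, the element $f$ is transcendental over $K$ and hence genuinely involves some $t_{i}$, say $t_{1}$. For each $c\in K$ the assignment $t_{1}\mapsto t_{1}+c$ (fixing the remaining variables and $K$ pointwise) extends to an element $\tau_{c}\in\mathrm{Gal}(L/K)$. The key step is to check that $\tau_{c}(f)=f$ for all $c$ forces $f$ to be independent of $t_{1}$: writing $f=p/q$ in lowest terms and comparing poles (or leading terms, if $f$ is a polynomial in $t_{1}$) shows that $f(t_{1}+c,\dots)=f(t_{1},\dots)$ can hold for infinitely many $c$ only if $f\in K(t_{2},\dots,t_{n})$. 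Iterating over $t_{2},\dots,t_{n}$ yields $f\in K$, so no non-constant element is fixed and $L$ is Galois over $K$; this argument relies on $K$ supplying infinitely many translation parameters.

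For the assertion that $L$ is \emph{strong Galois} over $K$, the heart of the matter is to describe $\Sigma[L/K]_{ld}$. First, $\Sigma[L/K]_{ld}\neq\varnothing$, since $(\Delta,\{1\})$ is a nice basis whose $A$-part generates only $K$, so $K(\Delta)$ and $K(\{1\})=K$ are trivially linearly disjoint over $K$. The main point is then the claim that \emph{every} $(\Delta',A)\in\Sigma[L/K]_{ld}$ satisfies $K(\Delta')=L$ and $|A|=1$. To see this I would use that linear disjointness of $K(\Delta')$ and $K(A)$ over $K$ implies algebraic disjointness, whence transcendence degrees add:
\begin{equation*}
|\Delta| \;=\; \mathrm{trdeg}_{K}L \;=\; \mathrm{trdeg}_{K}K(\Delta')+\mathrm{trdeg}_{K}K(A) \;=\; |\Delta'|+\mathrm{trdeg}_{K}K(A).
\end{equation*}
Since $|\Delta'|=|\Delta|$, this forces $\mathrm{trdeg}_{K}K(A)=0$, i.e.\ $A\subseteq\overline{K}\cap L=K$; and as the members of $A$ must be linearly independent over $K(\Delta')\supseteq K$, necessarily $A=\{c\}$ with $c\in K^{\times}$. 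Then $A$ spanning $L$ over $K(\Delta')$ gives $K(\Delta')=L$.

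With this classification the second assertion is immediate: for every linearly disjoint basis one has $K(\Delta')=L$, so the requirement that $L$ be an algebraic Galois extension of $K(\Delta')$ reduces to the trivial extension $L/L$, which is vacuously (algebraic) Galois; combined with $\Sigma[L/K]_{ld}\neq\varnothing$, this is precisely strong Galois over $K$. I expect the only genuinely substantive step to be the transcendence-degree computation classifying the linearly disjoint bases — in particular the passage from linear disjointness to algebraic disjointness and the additivity of transcendence degree — whereas the first assertion rests on the routine, if field-size-sensitive, fact that translations exhaust the fixed field down to $K$.
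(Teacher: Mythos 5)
Your treatment of the first item has a genuine gap, and it is exactly the one you flag at the end: the translation argument needs infinitely many parameters, while the proposition is asserted for an arbitrary field $K$. The gap is not repairable in general, because for finite $K$ and $\Delta=\{t\}$ the statement itself is false: every $K$-automorphism of $K(t)$ is a M\"obius transformation, so $Gal(\mathbb{F}_{q}(t)/\mathbb{F}_{q})\cong PGL_{2}(\mathbb{F}_{q})$ is a finite group, and by Artin's theorem its fixed field $F$ satisfies $[\mathbb{F}_{q}(t):F]=q^{3}-q$, whence $F\neq\mathbb{F}_{q}$ (already the translation subgroup fixes $t^{q}-t\notin\mathbb{F}_{q}$). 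When $|\Delta|\geq 2$ you could rescue your argument over finite fields by translating $t_{1}$ by non-constant elements of $K(t_{2},\dots,t_{n})$ rather than by constants, but for one variable over a finite field there is nothing to rescue. For what it is worth, your argument is sounder than the paper's own: the paper takes the single inversion $\tau\colon v\mapsto v^{-1}$ and asserts that $K$ is the invariant subfield of $\tau$, which is false over every field, since $v+v^{-1}$ is fixed by $\tau$ (the fixed field of $\langle\tau\rangle$ in $K(v)$ is $K(v+v^{-1})$). So the honest summary is: your proof of the first item is valid for all infinite $K$, the paper's argument is invalid for every $K$, and the proposition should carry the hypothesis that $K$ is infinite.

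On the second item you reach the same structural fact as the paper --- every linearly disjoint basis $(\Delta',A)$ of $L/K$ has $K(\Delta')=L$, so the strong Galois condition degenerates to the trivial extension $L/L$ --- but by a genuinely different and better route. The paper ``reduces by induction'' to the one-variable case via $K(u,v)=(K(u))(v)$, a reduction it never justifies (strong Galois is not obviously stable in towers, and induction on the number of variables cannot reach countably infinite $\Delta$ at all), and then classifies the linearly disjoint bases of $K(t)/K$ by an ad hoc computation with the minimal polynomial of $t$ over $K(s)$. Your route through algebraic disjointness treats all cardinalities of $\Delta$ uniformly and in one stroke. One step does need rewording: when $|\Delta|$ is infinite you cannot cancel $|\Delta'|=|\Delta|$ from your displayed additivity equation to conclude $\mathrm{trdeg}_{K}K(A)=0$, since infinite cardinals absorb finite summands. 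Argue instead directly from freeness: each $a\in A$ is algebraic over $K(\Delta')$ because $L=K(\Delta')[A]$ is algebraic over $K(\Delta')$, while any element of $K(A)$ transcendental over $K$ would remain transcendental over $K(\Delta')$ by algebraic disjointness; hence every $a\in A$ is algebraic over $K$, so $A\subseteq\overline{K}\cap L=K$, and the rest of your classification ($A=\{c\}$, $K(\Delta')=L$) goes through verbatim. With that change your proof of the second item is complete, valid for every base field, and strictly more general than the paper's.
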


\begin{proof}
$(i)$. Prove $K(\Delta)$ is Galois over $K$.
It suffices to prove that $K$ is the invariant subfield of the Galois group $Gal(K(\Delta)/K)$.

In deed, let $\tau$ be an automorphism of $K(\Delta)$ over $K$ defined by
\begin{equation*}
v\mapsto \tau(v)=\frac{1}{v}
\end{equation*}
for any $v\in \Delta$.

Hence, $\tau \in Gal(K(\Delta)/K)$ is given by
\begin{equation*}
\frac{f(v_{1},v_{2},\cdots ,v_{n})}{g(v_{1},v_{2},\cdots ,v_{n})}\in k(\Delta)
\end{equation*}
\begin{equation*}
\mapsto \frac{f(\tau (v_{1}),\tau (v_{2}),\cdots
, \tau (v_{n}))}{
g(\tau (v_{1}),\tau (v_{2}),\cdots
,\tau (v_{n}))}\in k(\Delta)
\end{equation*}
for any $$v_{1},v_{2},\cdots
,v_{n}\in \Delta$$ and
for any polynomials $$f(X_{1},X_{2},\cdots, X_{n})$$ and $$g(X_{1},X_{2},
\cdots, X_{n})\not= 0$$ over the field $K$ with $$g(v_{1},v_{2},\cdots
,v_{n})\not= 0.$$

It is seen that $\tau$ is well-defined since we have
\begin{equation*}
g(v_{1},v_{2},\cdots ,v_{n})= 0
\end{equation*}
if and only if
\begin{equation*}
g(\tau (v_{1}),\tau (v_{2}),\cdots
,  \tau (v_{n}))=0.
\end{equation*}

Then $K$ is the invariant subfield of the automorphism $\tau $ of $K(\Delta)$  over $K$, from which it follows that $K$ is the invariant subfield of the
Galois group $Gal(K(\Delta)/K)$.

This proves that $K(\Delta)$ is Galois over $K$.

$(ii)$.
Let $\Delta =\{ t_{1},t_{2},\cdots, t_{n},\cdots \}$ be a set of algebraically independent variables over $K$. Prove that $K(\Delta)$ is strong Galois over $K$.

By induction on $n$, it reduces to consider the case that $\Delta=\{t\}$ since we have $$K(u,v)=(K(u))(v)$$ for any $u,v\in \Delta$.

In the following we prove $L \triangleq K(t)$ is strong Galois over $K$.

From definition it is immediate that $L$ is a unique conjugation of $L$ over $K$ with respect to the nice basis $t$.

Take any given linearly disjoint basis $(S,A)$ of $L$ over $K$. We have $L=K(S)[A]$.

It is seen that $A$ and $S$ are both finite sets. In deed, from definition it is immediate that $S\subseteq L$ is a set of algebraically independent variables over $K$ and $A\subseteq L$ is a set of generators of $L$ over the $K(S)$ such that $K(S)[A]$ is algebraic over $K(S)$. Then $S=\{s\}$ has a single element since the dimension of $L$ over $K$ is one. On the other hand, as $t\in K(s)[A]$, there are a finite number of elements $$\omega_{1},\omega_{2},\cdots,\omega_{n}\in A$$ and $$a_{1},a_{2},\cdots,a_{n}\in K(s),$$ respectively, such that
$$t=a_{1}\omega_{1}+a_{2}\omega_{2}+\cdots+a_{n}\omega_{n}.$$
Hence, $A$ must be a finite set since
 $L=K(s)[A]$ and $A$ is a linearly independent set over $K(s)$.

Suppose $$[L:K(s)]=m.$$ As $L=K(s)[A]$, we have $$\omega_{1},\omega_{2},\cdots,\omega_{m-1}\in L$$ such that $$A=\{1,\omega_{1},\omega_{2},\cdots,\omega_{m-1}\}.$$

It is clear that $t$ is  algebraic over $K(s)$. Let $$f(X)=f_{h}X^{h}+f_{h-1}X^{h-1}+\cdots+f_{1}X+f_{0}$$ be the irreducible polynomial of $t$ over $K(s)$ with $f_{h}\not=0$. Then we must have $$h=1, A=\{1\}.$$
In deed, hypothesize $h>1$. Let $\beta\in K(t)$ be a root of $f(X)=0$. It is easily seen that $\beta$ is a linear combination of the elements contained in $A$ with coefficients in $K(s)$. Without loss of generality, suppose $\omega_{1}=\beta$. Then $K(s)$ and $K(A)$ is linearly disjoint over $K$, which will be in contradiction.

It follows that $$t=-\frac{f_{0}}{f_{0}}=\frac{a(s)}{b(s)}$$ where $a(X)$ and $b(X)\not=0$ are polynomials with coefficients in $K$. Then $$L=K(s)=K(t).$$

As $K(s)$ is Galois over $K(s)$ itself, it is seen that
$L=K(t)$ is strong Galois over $K$. This completes the proof.
\end{proof}

As an application of \emph{Proposition 3.9}, we have the following result on a tower of Galois extensions.

\begin{proposition}Let $L$ be a finitely generated purely transcendental extension over a field $K$
and let $M$ be a finitely generated extension over $L$. There are the following statements.
\begin{itemize}
\item Let $M$ be algebraic Galois over $L$. Then $M$ is Galois over $K$. In particular, a tame Galois extension is Galois.

\item Let $M$ be Galois over $L$. Then $M$ is Galois over $K$ if $M$ and $L$ are linearly disjoint over $K$.

\item Suppose that $M$ is strong Galois over $L$. Then $M$ is strong Galois over $K$ if $M$ and $L$ are linearly disjoint over $K$.
\end{itemize}
\end{proposition}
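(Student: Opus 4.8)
Parts (i) and (ii) ask that $K$ be the invariant subfield of $Gal(M/K)$, while part (iii) asks for the stronger conclusion that $M$ be algebraic Galois over $K(\Delta)$ for every linearly disjoint basis $(\Delta,A)$ of $M/K$; in each case the common strategy is to sandwich the relevant fixed field between the base and $L$ and then to squeeze it down by producing enough automorphisms of $M$.

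For part (i) I would first record that, since $K\subseteq L$, every automorphism of $M$ fixing $L$ pointwise also fixes $K$ pointwise, so $Gal(M/L)\subseteq Gal(M/K)$ and hence the invariant field $F:=\{a\in M:\sigma(a)=a\text{ for all }\sigma\in Gal(M/K)\}$ satisfies $F\subseteq\mathrm{Fix}(Gal(M/L))$. Because $M$ is algebraic Galois over $L$, the right-hand side is exactly $L$, so $K\subseteq F\subseteq L$. Next, by \emph{Proposition 3.9} the purely transcendental extension $L/K$ is Galois, i.e. $\mathrm{Fix}(Gal(L/K))=K$; thus for every $x\in L\setminus K$ there is $\phi\in Gal(L/K)$ with $\phi(x)\neq x$. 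The argument would close if each such $\phi$ extended to an element of $Gal(M/K)$, for then no $x\in L\setminus K$ could lie in $F$, forcing $F=K$. The ``in particular'' clause then follows by taking $L=K(\Delta)$ for the nice basis $(\Delta,A)$ witnessing tameness, since a tame Galois $M/K$ is by definition algebraic Galois over $K(\Delta)$.

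The extension of $\phi$ is exactly where I expect the genuine difficulty to lie, and it is the step I would examine most carefully. Writing $M=L(\alpha)$ with separable minimal polynomial $p(X)\in L[X]$ (using that $M/L$ is finite separable), the automorphism $\phi$ extends to $M$ precisely when the polynomial $p^{\phi}$ obtained by applying $\phi$ to the coefficients has a root in $M$; since $M/L$ is normal this amounts to $p^{\phi}$ splitting in $M$. The natural attempt is to invoke \emph{Lemma 2.1} and Weil's specialization theory to transport $\alpha$ along $\phi$ and realize $\phi$ as a conjugation of $M$, but this only yields an isomorphism of $M$ onto a possibly \emph{different} conjugate $\phi(M)\subseteq\overline{M}$, with no a priori reason that $\phi(M)=M$. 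I suspect the obstruction is real: $Gal(M/K)$ can be far too small to have invariant field $K$ — for instance when $M$ is the function field over $L=K(t)$ of a curve of genus $\geq 2$, in which case $Gal(M/K)$ is finite and its fixed field has the same transcendence degree as $M$. I would therefore flag that, absent an extra hypothesis guaranteeing that enough $L$-conjugates of $\alpha$ survive inside $M$, this step cannot be pushed through, and part (i) as stated looks too strong.

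For parts (ii) and (iii) the hypothesis that $M$ and $L$ be linearly disjoint over $K$ is presumably what is meant to control this extension, and I would try to route both through part (i) together with the strong-conjugation machinery of \S 3. For part (ii) I would fix a linearly disjoint basis and use the disjointness to decompose a $K$-automorphism into its action on the transcendental part $L$ and on the residual algebraic part, then reassemble it, the disjointness being what makes the two actions independent and hence simultaneously realizable. For part (iii) I would combine this with \emph{Proposition 3.7}, rewriting strong Galois as ``separably generated $+$ unique strong conjugation'', and check that a linearly disjoint basis of $M/K$ is assembled from one of $L/K$ (which is trivial, $L/K$ being purely transcendental and strong Galois by \emph{Proposition 3.9}) together with one of $M/L$, so that the unique-strong-conjugation property descends from $M/L$ to $M/K$. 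In both parts the main obstacle is again the interplay of automorphisms with the base change from $L$ to $K$: one must verify that linear disjointness genuinely lets an automorphism of $M$ over $L$ and an automorphism of $L$ over $K$ be combined into an automorphism of $M$ over $K$, and that the linearly disjoint bases transform compatibly — and in view of the failure already visible in part (i) I would treat even these refined statements with caution.
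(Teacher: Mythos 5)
Your suspicion is correct, and it is the central point: part (i) of the proposition is false as stated, and your counterexample sketch is genuine. Take $K=\mathbb{C}$, $L=\mathbb{C}(t)$ and $M=\mathbb{C}(t)(y)$ with $y^{2}=t^{5}-1$. Then $M/L$ is algebraic Galois of degree $2$, but $M$ is the function field of a curve of genus $2$, so by Hurwitz--de Franchis the group $Gal(M/\mathbb{C})$ is finite; by Artin's lemma its fixed field $F$ satisfies $[M:F]=|Gal(M/\mathbb{C})|<\infty$, hence $F$ has transcendence degree $1$ over $\mathbb{C}$ and $F\neq\mathbb{C}$. So $M$ is not Galois over $K$, and with it the implication ``tame Galois $\implies$ Galois'' collapses. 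The obstruction is exactly the one you isolated: an element of $Gal(L/K)$ need not extend to an automorphism of $M$ (here a M\"obius transformation of $t$ would have to permute the six branch points of $M/L$, so only finitely many extend).

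The paper's own proof founders at precisely the step you flagged, so your caution was well placed. It first asserts, ``without loss of generality,'' that there are single automorphisms $\tau_{1}\in Gal(L/K)$ and $\tau_{2}\in Gal(M/L)$ whose individual fixed fields are $K$ and $L$ respectively (unjustified), and then ``defines'' $\tau\in Gal(M/K)$ by applying $\tau_{1}$ to the entries of a rational expression that lie in $L$ and $\tau_{2}$ to the entries that lie in $M$. This map is not well-defined, because the algebraic relations tying the generators of $M$ to coefficients in $L$ are torn apart: in the example above, with $\tau_{1}(t)=t+1$ and $\tau_{2}(y)=-y$, the relation $y^{2}-t^{5}+1=0$ is sent to $y^{2}-(t+1)^{5}+1\neq 0$. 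Your reading of parts (ii) and (iii) also matches the paper: it deduces them from part (i) by merging a transcendence basis $\Delta$ of $M/L$ with one of $L/K$, so they inherit the same gap. Worse, since $L\subseteq M$, the hypothesis that $M$ and $L$ be linearly disjoint over $K$ forces $L=K$ (for $x\in L\setminus K$, the nonzero element $x\otimes 1-1\otimes x$ of $M\otimes_{K}L$ maps to $0$ in $M$), so those two statements are essentially vacuous. In short: your proposal correctly identifies the one essential difficulty, correctly predicts that it cannot be overcome, and in effect refutes the proposition; the paper's argument is invalid.
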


\begin{proof}
Prove the $1st$. Let $M$ be an algebraic Galois extension.
It is seen that $L/K$ is Galois from \emph{Proposition 3.9}. Prove that $M$ is Galois over $K$.

In fact, without loss of generality, assume that we have two automorphisms $\tau_{1}\in Gal(L/K)$ and $\tau_{2}\in Gal(M/L)$
such that $$K=\{x\in L:\tau_{1}(x)=x\};$$ $$L=\{x\in M:\tau_{2}(x)=x\}$$
by induction on the number of generators of the fields since $L/K$ and $M/L$ both are finitely generated.

Then there is an automorphism $\tau \in Gal(M/K)$  induced by $\tau_{1}$
and $\tau_{2}$ in such a manner
\begin{equation*}
\frac{f(v_{1},v_{2},\cdots, v_{m}, \cdots ,v_{n})}{g(v_{1},v_{2},\cdots, v_{m}, \cdots , v_{n})}\in M
\end{equation*}
\begin{equation*}
\mapsto \frac{f(\tau_{1}(v_{1}),\tau_{1}(v_{2}),\cdots
,\tau_{1}(v_{m}),\tau_{2}(v_{m+1}),\cdots, \tau_{2}(v_{n}))}{
g(\tau_{1}(v_{1}),\tau_{1}(v_{2}),\cdots
,\tau_{1}(v_{m}),\tau_{2}(v_{m+1}),\cdots, \tau_{2}(v_{n}))}\in M
\end{equation*}
for any $$v_{1},v_{2},\cdots, v_{m} \in L, \, v_{m+1}, \cdots ,v_{n}\in M$$ and
for any polynomials $$f(X_{1},X_{2},\cdots, X_{n})$$ and $$g(X_{1},X_{2},\cdots, X_{n})\not= 0$$ over the field $K$ with $$g(v_{1},v_{2},\cdots
,v_{n})\not= 0.$$

Hence, we have $$K=\{x\in M:\tau (x)=x\}.$$

It follows that $K$ is the invariant subfield of the Galois group $Gal(M/K)$. This proves that $M$ is Galois over $K$.

Prove the $2ed$. Let $L=K(\Delta_{0})$ be purely transcendental over $K$. Take a nice basis $(\Delta, A)$ of $M$ over $L$ such that $M$ is algebraic Galois over $L(\Delta)$.

As $M$ and $L$ are linearly disjoint over $K$, it is seen that the union $\Delta \cup \Delta_{0}$ is an algebraically independent set over $K$.

It is easily seen that $M=K(\Delta \cup \Delta_{0})[A]$ is an algebraic Galois extension of $K(\Delta \cup \Delta_{0})$. By the first statement, it is seen that $M$ is Galois over $K$.

Prove the $3rd$. Likewise, we give the proof of the third statement.
\end{proof}

\subsection*{Acknowledgment}
The author would like to express his
sincere gratitude to Professor Li Banghe for his advice and instructions on
algebraic geometry and topology.

The author also would like to express his sincere gratitude to Professor
Laurent Moret-Bailly for pointing out an error in an early version of the preprint of the paper.

\end{document}